\newtheorem{theorem}{Theorem}[section]
\newtheorem{lemma}{Lemma}[section]
\newtheorem{corollary}{Corollary}[section]
\newcommand{\om}{\omega}
\newcommand{\zz}{{\mathbb{Z}^d}}
\newcommand{\rr}{\mathbb{R}^d}
\newcommand{\R}{\mathbb{R}}
\newcommand{\N}{\mathbb{N}}
\newcommand{\conv}{\mathrm{conv}}
\newcommand{\vv}{\mathrm{vert\;}}
\newcommand{\vol}{\mathrm{vol\;}}
\newcommand{\p}{\mathcal{P}}
\newcommand{\aff}{\textrm{aff\;}}
\newcommand{\tri}{\Delta}
\numberwithin{equation}{section}
\begin{document}

\title{Volumes of convex lattice polytopes and a question of V. I. Arnold}
\author{Imre B\'ar\'any and Liping Yuan}
\keywords{Lattices, polytopes, integer convex hull, statistics of convex lattice polytopes}
\subjclass[2000]{Primary 52B20, secondary 11H06}

\begin{abstract}
We show by a direct construction that there are at least $\exp\{cV^{(d-1)/(d+1)}\}$ convex lattice
polytopes in $\rr$ of volume $V$ that are different in the sense that none of them can be
carried to an other one by a lattice preserving affine transformation. This is achieved by
considering the family $\p^d(r)$  (to be defined in the text) of convex lattice polytopes whose volumes are
between $0$ and $r^d/d!$. Namely we prove that for $P \in \p^d(r)$, $d!\vol P$ takes all possible integer values
between $cr^{d-1}$ and $r^d$ where $c>0$ is a constant depending only on $d$.
\end{abstract}

\maketitle

\section{Introduction and main result}\label{sec:introd}

Let $e_1,\ldots,e_d$ be the standard basis of $\R^d$, $d \ge 2$ and for $r \in \N$ define
\[
A(r)=\conv\{re_1,\ldots,re_d\} \mbox{ and } S(r)=\conv\{A(r)\cup \{0\}\}.
\]
We will write $A^d(r)$ and $S^d(r)$ in case of need. We further
define a family $\p(r)=\p^d(r)$ as the collection of all convex
lattice polytopes $P$ with $A(r)\subset P\subset S(r)$. As is well
known, $v(P)=d! \vol P$ is an integer for $P \in \p(r)$, and $0\le
v(P) \le r^d$ since $A(r)$ and $S(r)$ belong to the family $\p(r)$.
It is also clear that if $v(P)\ne 0$, then it is at least $r^{d-1}$
since $P\in \p(r)$ with $v(P)>0$ contains $d$-dimensional simplex
whose base is $A(r)$.

The question addressed in this paper concerns the set of values of
$\{v(P):P \in \p^d(r)\}$. It has emerged in connection with a
problem of V. I. Arnold~\cite{ar} (see also~\cite{bar}) as we will
explain soon. Our main result is

\begin{theorem}\label{th:main} Given $d\ge 2$ there is a number $c=c_d>0$ such that for all
integers $v \in [cr^{d-1},r^d]$ there is a polytope $P \in \p^d(r)$ with $v(P)=v$.
\end{theorem}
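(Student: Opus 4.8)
The plan is to argue by induction on the dimension $d$, constructing for each target integer $v$ a polytope $P\in\p^d(r)$ whose normalized volume $v(P)=d!\vol P$ equals $v$. I would rely on three ingredients: a coarse family of pyramids that realizes every multiple of $r^{d-1}$ in the range; a fine adjustment that changes $v(P)$ by exactly one unit at a time; and the recursive cross-sectional structure of the family $\p^d(r)$, which lets the inductive hypothesis in dimension $d-1$ fill the gaps of length $r^{d-1}$ between consecutive pyramids.

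For the coarse skeleton, observe that for a lattice apex $w\ge 0$ with $\sum_i w_i=s\le r$ the pyramid $\conv(A(r)\cup\{w\})$ lies in $\p^d(r)$ and has $v=r^{d-1}(r-s)$, so as $s$ runs over $0,1,\dots,r$ we obtain exactly the multiples $0,r^{d-1},2r^{d-1},\dots,r^d$. The recursive structure comes from slicing by the hyperplanes $x_1=t$: the section $P\cap\{x_1=t\}$, viewed in the remaining coordinates, is sandwiched between $A^{d-1}(r-t)$ and $S^{d-1}(r-t)$, hence belongs to $\p^{d-1}(r-t)$, and $\vol P=\int_0^r\vol\big(P\cap\{x_1=t\}\big)\,dt$. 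The key quantitative point is that a cone of unit height in the $x_1$-direction over a base $Q\in\p^{d-1}(\rho)$ contributes to $v(P)$ an amount equal to $(d-1)!\vol Q$, the normalized $(d-1)$-volume of $Q$. Thus attaching such a cone near the apex of a pyramid adds to $v(P)$ any value that the $(d-1)$-dimensional problem can realize, which by the inductive hypothesis is every integer in an interval of length comparable to $r^{d-1}$, precisely the gap I must fill.

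To nail the exact integer after this coarse-plus-cone adjustment, I would use a unimodular apex slide: if a simplex inside $P$ has a unimodular base facet $F$ (so $(d-1)!\vol F=1$) and apex a lattice point, then $v$ of that simplex is the lattice height of the apex over $\aff F$, and sliding the apex by a lattice vector joining two consecutive lattice layers parallel to $F$ changes $v(P)$ by exactly $1$. Performing a bounded number of such slides, confined to a small region so that convexity and the sandwich $A(r)\subset P\subset S(r)$ are preserved, sweeps through all residues below the reach of the inductive cone and thereby covers every integer in $[c_dr^{d-1},r^d]$. The base case $d=2$ I would treat directly: here $P$ is the convex hull of the segment $A(r)$ with a convex lattice path bulging toward the origin, and one checks by an explicit trapezoid-plus-tilted-edge construction, together with single-vertex slides, that every integer in $[cr,r^2]$ occurs.

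The main obstacle is not the volume bookkeeping but the geometry of assembly: I must guarantee that the coarse pyramid, the inductive cone, and the slid simplices fit together into a single convex body still trapped between $A(r)$ and $S(r)$, and that passing from one coarse level $s$ to the next introduces no skipped integer. Controlling the cross-sections so that they are nested convexly as $t$ varies, and verifying that the fine mechanism reaches down to the very bottom of each length-$r^{d-1}$ gap (where the inductive hypothesis, valid only above $c_{d-1}\rho^{d-2}$, runs out and the unit slides must take over), is where the real care is required; the constant $c_d$ is whatever is forced by this lowest stratum.
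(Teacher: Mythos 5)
You are right that the crux is the assembly, but that is precisely the part your plan does not supply, and as stated two of its load-bearing claims fail. First, the exact bookkeeping for the ``inductive cone'' is unjustified: the union of the pyramid $\conv\{A(r)\cup\{w\}\}$ with a unit-height cone over a base $Q\in\p^{d-1}(\rho)$ is in general not convex, and if you restore convexity by taking the convex hull, additivity of volume is lost --- the sections of the convex hull of two convex sets lying in parallel hyperplanes are the Minkowski combinations $(1-\la)K_0+\la K_1$, so the added volume involves mixed volumes of the two sets, not just $\vol Q$. (Relatedly, your motivating claim that $P\cap\{x_1=t\}$ belongs to $\p^{d-1}(r-t)$ is false: a hyperplane section of a lattice polytope need not be a lattice polytope.) The paper never glues; it goes the other way, removing from $S(r)$ explicitly placed, pairwise internally disjoint simplices $\tri_0,\tri_1,\ldots,\tri_d$ (the latter ones sitting at the corners $e_i^*=2x_0e_i$ of $\tri_0=\conv\{2x_0e_1,\ldots,2x_0e_d\}$), and convexity of what remains is checked by exhibiting it as the convex hull of an explicit vertex set. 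Some such explicit disjoint placement is exactly what your sketch would have to invent.

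Second, even granting the assembly, the covering argument has an arithmetic hole in every stratum, not only the lowest one. Within the gap $[kr^{d-1},(k+1)r^{d-1}]$ your cone realizes only the integers in $[c_{d-1}\rho^{d-2},\rho^{d-1}]$, and necessarily $\rho\le r$; in fact $\rho$ must shrink as $k$ grows, since for $v$ near $r^d$ the pyramid nearly fills $S(r)$ and leaves little room for any attachment. Hence every gap contains an uncovered stretch of length at least $c_{d-1}\rho^{d-2}+(r^{d-1}-\rho^{d-1})\gg r^{d-2}$, and this recurs for all $k$, so enlarging $c_d$ does not help. A bounded number of unit apex-slides changes $v(P)$ by $O(1)$ and cannot bridge a stretch whose length grows polynomially in $r$; bridging it means realizing every integer in an interval of length of order $r^{d-2}$ as an exact further correction, which is the same problem again one scale down, then again at scale $r^{d-3}$, and so on. This cascade of corrections at all scales is precisely what the paper's Lemma~\ref{l:g-func} formalizes: the greedy expansion $m=g(x_0)+g'(x_1)+\cdots+g^{(d-1)}(x_{d-1})+m_d$ with $g(x)=(2x)^d$, whose terms are then realized by the $d+1$ disjoint simplices; only the final remainder $m_d\le 2^dd!$ is bounded, and only that one is handled by a single small simplex. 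So your outline, pushed to completion, would essentially have to rediscover this multi-scale decomposition together with an explicit disjoint geometric realization of it; as it stands, the proposal is a program whose two hardest steps are missing rather than a proof.
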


We will prove this result in a more precise form in Section~\ref{sec:proof}. The case $d=2$ is
simple: it is easy to see that $\{v(P):P \in \p^2(r)\}=\{0,r,r+1,\ldots,r^2\}$.
We will show in Section~\ref{sec:gaps} that there are other gaps besides
$(0,r^{d-1})$ in the set $\{v(P): P\in \p^d(r)\}$ when $d\ge 3$.

\section{Arnold's question}

Two convex lattice polytopes are {\sl equivalent} if one can be carried to the other by a lattice
preserving affine transformation. This is an equivalence relation and equivalent polytopes
have the same volume. Let $N_d(V)$ denote the number of equivalence classes of convex lattice
polytopes in $\rr$ of volume $V$. (Of course, $d!V$ is a positive integer.) Arnold~\cite{ar} showed that
\[
V^{1/3} \ll \log N_2(V) \ll V^{1/3} \log V.
\]
After earlier results by Konyagin and Sevastyanov~\cite{ks}, the upper bound was improved
and extended to higher dimensions to
\[
\log N_d(V) \ll V^{(d-1)/(d+1)}
\]
by B\'ar\'any and Pach ~\cite{bp} (for $d=2$) and by B\'ar\'any and
Vershik ~\cite{bv} (for $d\ge 2$). The lower bound $\log N_d(V) \gg
V^{(d-1)/(d+1)}$ for all $d\ge 2$ has recently been proved
in~\cite{bar}. More information about Arnold's question can be found
in Arnold~\cite{ar}, B\'ar\'any~\cite{bar}, and Zong~\cite{zh} and
Liu, Zong~\cite{lz}.

We obtain the same lower bound as a direct and fairly simple application of Theorem~\ref{th:main}:

\begin{corollary}\label{cor:main} $V^{(d-1)/(d+1)} \ll \log N_d(V)$.
\end{corollary}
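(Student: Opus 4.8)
The plan is to fix a large integer $r$, work entirely inside the family $\p^d(r)$, and prove the bound for the volume $V$ of a carefully chosen member; since these volumes tend to infinity with $r$, this yields the asymptotic inequality. The engine is a pigeonhole argument. Every $P\in\p^d(r)$ has $v(P)=d!\vol P$ an integer, and by Theorem~\ref{th:main} together with the elementary bounds noted in the introduction these values all lie in $[cr^{d-1},r^d]$, so there are at most $r^d+1$ of them. Hence, if $M_r$ is the total number of members of $\p^d(r)$, some single value $v^\ast$ is attained by at least $M_r/(r^d+1)$ of them; set $V=v^\ast/d!$. These are genuinely distinct convex lattice polytopes, all of the one volume $V$, and all contained in $S(r)$.

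First I would pass from ``distinct'' to ``inequivalent'', the only step where the equivalence relation enters, and a cheap one. If $P\subset S(r)$ is $d$-dimensional, fix $d+1$ affinely independent lattice vertices of $P$; any lattice-preserving affine map $\phi$ is determined by their images, and if $\phi(P)\subset S(r)$ then each image is one of the at most $r^d$ lattice points of $S(r)$. Thus every equivalence class meets our collection in at most $r^{d(d+1)}$ polytopes, so the number of classes, which is a lower bound for $N_d(V)$, is at least $M_r/((r^d+1)r^{d(d+1)})$. The denominator is only polynomial in $r$, costing a mere additive $O(d^2\log r)$ in the exponent, so $\log N_d(V)\ge \log M_r - O(d^2\log r)$. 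Finally, since $v^\ast\le r^d$ gives $r^d\ge d!V\ge V$, we get $r^{d(d-1)/(d+1)}\ge V^{(d-1)/(d+1)}$, and combining the pieces yields $\log N_d(V)\gg V^{(d-1)/(d+1)}$, \emph{provided} $\log M_r\gg r^{d(d-1)/(d+1)}$.

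Everything thus reduces to the single quantitative input $\log M_r\gg r^{d(d-1)/(d+1)}$, that is, $\p^d(r)$ must contain at least $\exp\{\Omega(r^{d(d-1)/(d+1)})\}$ convex lattice polytopes. This is the main obstacle, and it is precisely what the sharper, constructive form of Theorem~\ref{th:main} proved in Section~\ref{sec:proof} is meant to furnish: one needs not merely one polytope of each prescribed volume, but a construction flexible enough to produce exponentially many distinct caps over $A(r)$ inside $S(r)$. The surjectivity onto the whole interval $[cr^{d-1},r^d]$ plays a second role as well, namely upgrading the conclusion from the single pigeonhole value $v^\ast$ to every admissible target: given a prescribed $V$, choose $r$ with $d!V\in[cr^{d-1},r^d]$ and use Theorem~\ref{th:main} as a corrector to tune the volume of each member of a large flexible family to exactly $d!V$, so that the whole family shares the volume $V$. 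Carrying out this correction while preserving convexity and additivity of volume is the delicate point; once it is in place, the counting above runs verbatim and delivers the corollary.
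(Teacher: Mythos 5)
Your reduction has the right shape, and two of its ingredients are sound: the observation that a polynomial multiplicity per equivalence class is harmless (your count that a lattice-preserving affine map sending $P\subset S(r)$ into $S(r)$ is determined by the images of $d+1$ affinely independent lattice vertices, giving at most $r^{d(d+1)}$ polytopes per class, is a perfectly good and even simpler substitute for the paper's long-edge argument, which gets the sharper bound $d!$), and the closing idea of using Theorem~\ref{th:missed} as a ``corrector'' to tune volumes to a prescribed $V$ is exactly the role it plays in the paper. But the argument has a fatal gap at the step you yourself flag as the main obstacle: the input $\log M_r \gg r^{d(d-1)/(d+1)}$ does \emph{not} follow from Theorem~\ref{th:main} or from its constructive proof. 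Theorem~\ref{th:main} produces, for each integer $v\in[cr^{d-1},r^d]$, \emph{one} polytope of that volume; the construction in Section~\ref{sec:proof} removes at most $d+1$ explicit simplices from $S(r)$ per target value. Summing over all targets this yields at most roughly $r^d$ polytopes --- polynomially many --- so it cannot certify that $\p^d(r)$ has $\exp\{\Omega(r^D)\}$ members. Surjectivity of the volume map onto an interval of length $r^d$ and exponential richness of the family are entirely different statements, and the latter is the actual heart of the corollary.

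What supplies the exponential family in the paper is not Theorem~\ref{th:main} at all but the integer convex hull of the ball (Section~\ref{sec:intconv}): $Q_r=I(rB^d)$ has $\gg r^D$ vertices, the subset $X$ of vertices of $Q^r=Q_r\cap\rr_+$ with all coordinates positive satisfies $|X|\gg r^D$, and the $2^{|X|}$ polytopes $Q(Z)=I(Q^r\setminus Z)$, $Z\subset X$, are pairwise distinct with volumes in the narrow window $[V,V+br^D/d!]$. Only then does Theorem~\ref{th:missed} enter, precisely as your ``corrector'': since $m(Z)=d!(\vol Q(Z)-V)\le br^D\le(\rho-2^dd!)^d$ with $\rho=\lfloor r/10\rfloor$, one can replace $Q(Z)\cap S(\rho)$ by a polytope $P(Z)\in\p(\rho)$ of prescribed missed volume, obtaining $P^*(Z)$ of volume exactly $V$ for \emph{every} prescribed admissible $V$ (your pigeonhole, by contrast, only handles a sparse sequence of volumes $v^*/d!$, whereas the paper's $\ll$ convention requires all $V>V_0$). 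So the proposal is not a proof: it defers both the exponential count and the correction mechanism to Theorem~\ref{th:main}, which furnishes neither; the missing construction is the content of Sections~\ref{sec:intconv} and~\ref{sec:cor}.
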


Some remarks are in place here about notation and terminology. A convex polytope $P\subset
\rr$ is a lattice polytope if its vertex set, $\vv P$ is a subset of $\zz$, the integer
lattice. The number of vertices of a polytope $P$ is denoted by $f_0(P)$. Throughout the
paper we use, together with the usual ``little oh'' and ``big Oh'' notation, the convenient
$\ll$ symbol, which means, for functions $f,g :\R_+ \to \R$, that $f(V) \ll g(V)$ if there
are constants $V_0>0$ and $c>0$ such that $f(V) \le cg(V)$ for all $V>V_0$. These constants,
to be denoted by $c,c_1,\dots,b,b_1\dots$ may only depend on dimension. The Euclidean norm
of the vector $x=(x_1,\dots,x_d)\in \rr$ is $\|x\|=\sqrt{x_1^2+\dots +x_d^2}$. $B^d$ denotes the
Euclidean unit ball of $\rr$, and $\vol B_d=\omega_d$. We write $[n]=\{1,2,\dots,n\}$.

The paper is organized as follows. The main result is proved in the next section.
The integer convex hull and some of its properties are
given in Section~\ref{sec:intconv}. The proof of Corollary~\ref{cor:main} is the content of
Section~\ref{sec:cor}. Then we describe further gaps in $\{v(P):P\in \p^r(r)\}$ when $d\ge 3$.
We finish with concluding remarks.

\section{Proof of Theorem~\ref{th:main}}\label{sec:proof}

As we have mentioned, $\{v(P):P \in \p^2(r)\}=\{0,r,r+1,\ldots,r^2\}$. We assume from now on that $d\ge 3$.
Here comes the more precise version of Theorem~\ref{th:main}.

\begin{theorem}\label{th:missed} Assume $d\ge 3$ and $r> r_0=2^dd!$. Then for every non-negative integer
$m\le (r-2^dd!)^d$ there is a $P \in \p^d(r)$ with $v(P)=r^d-m$.
\end{theorem}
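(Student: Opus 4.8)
The plan is to realize every target value as a \emph{volume deficit} measured down from the full simplex. Since $v(S(r))=d!\vol S(r)=r^d$ and the constraint $A(r)\subset P\subset S(r)$ forces the removed set $S(r)\setminus P$ to lie in the part of $S(r)$ not meeting the base facet $A(r)$, I would write $m=r^d-v(P)=d!\bigl(\vol S(r)-\vol P\bigr)$ and reduce the theorem to the following: for every integer $m\in[0,(r-r_0)^d]$ produce a convex lattice polytope $P$ with $A(r)\subset P\subset S(r)$ whose complementary corner $S(r)\setminus P$ has $d!$-volume exactly $m$. Writing $r'=r-r_0$, the two extremes are transparent: $m=0$ is $P=S(r)$, while $m=(r')^d$ is the frustum $P=\{x\ge 0:\ r'\le\sum x_i\le r\}=\conv(A(r)\cup A(r'))$, a lattice polytope in $\p(r)$ whose cut-off corner is $S(r')$, of $d!$-volume exactly $(r')^d$. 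Thus the whole content is to fill in, with no gaps, every integer strictly between these.

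I would prove this by induction on $d$, the case $d=2$ being the stated computation $\{v(P):P\in\p^2(r)\}=\{0,r,r+1,\ldots,r^2\}$. For the inductive step, slice $S^d(r)$ by the parallel hyperplanes $H_k=\{x:\sum x_i=k\}$, $k=0,1,\ldots,r$; each section $S^d(r)\cap H_k=\conv\{ke_1,\ldots,ke_d\}$ is a $(d-1)$-dimensional simplex, so cutting a corner of $S^d(r)$ near $0$ amounts to removing a nested, convexity-compatible family of $(d-1)$-dimensional corner pieces from these sections, to which the dimension $d-1$ result applies. The construction is two-scale. First a coarse cut: a single hyperplane through $a_1e_1,\ldots,a_de_d$ slices off the corner simplex $\conv\{0,a_1e_1,\ldots,a_de_d\}$ of $d!$-volume exactly $\prod a_i$, which lands the deficit within a controlled integer amount of the target $m$. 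Then a fine adjustment by the residual integer, using the $(d-1)$-dimensional theorem on a bottom section together with translates of unimodular simplices (each of $d!$-volume $1$) to change $v(P)$ by $\pm 1$ at a time without violating convexity or the sandwich $A(r)\subset P\subset S(r)$.

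The main obstacle is precisely the ``no gaps'' requirement: hitting \emph{every} integer, not merely a dense or coarsely spaced set of volumes, while respecting three rigid constraints at once---integrality of the vertices, convexity of $P$, and the inclusion $A(r)\subset P\subset S(r)$. The coarse family alone (the products $\prod a_i$, which include all perfect box-products but miss, say, a prime larger than $r$) leaves genuine gaps, so the crux is to show that the unit adjustment move is \emph{always} available throughout the range $[0,(r')^d]$. This is exactly where the buffer $r_0=2^dd!$ should enter: reserving a frustum of width $r_0$ between the working region and the base $A(r)$ guarantees enough room to insert or delete a unimodular corner piece and to keep the modified hull convex. Concretely, I expect the heart of the proof to be a monotone peeling lemma producing an explicit chain $S(r)=P_0\supset P_1\supset\cdots\supset P_M$ in $\p^d(r)$ with $v(P_i)-v(P_{i+1})=1$ and $v(P_M)=r^d-(r')^d$, got by removing origin-corner lattice points one at a time in an order that keeps each successive convex hull unimodular at the affected vertex; verifying that such an order exists and spans the full range is the step that I anticipate will demand the real work.
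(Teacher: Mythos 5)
Your reduction to missed volume, and your two endpoint examples, match the paper's setup exactly. But the engine of your argument --- the ``monotone peeling lemma'' producing a nested chain with $v(P_i)-v(P_{i+1})=1$ --- is precisely the whole difficulty, and you leave it unproven. It is not a routine verification, and there is real reason to doubt it as stated: in dimension $d\ge 3$, deleting a single lattice point from a convex lattice polytope removes a cap that can be an \emph{empty} lattice simplex of arbitrarily large volume (Reeve-type simplices), so single-point removals can jump $v$ by much more than $1$, and nothing in your sketch shows how the buffer $2^dd!$ forces the existence of an ordering in which every jump equals $1$. The quantitative structure of your plan also does not close: the coarse cut $\conv\{0,a_1e_1,\ldots,a_de_d\}$ realizes only the products $\prod a_i$, and the nearest such product to a target $m\approx (r-2^dd!)^d$ can be off by roughly $\prod_{i\ge 2}a_i\sim r^{d-1}$. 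So your ``fine adjustment'' must absorb a residual of order $r^{d-1}$, which neither unit moves nor an unspecified use of the $(d-1)$-dimensional statement on a bottom section (where, in any case, the relation between the removed $(d-1)$-volumes and the resulting $d$-volume is not addressed) is shown to deliver. Your slicing-by-hyperplanes induction has the same unaddressed issue: convexity couples consecutive sections, and slice-wise applications of the lower-dimensional theorem do not combine into integer control of the $d$-volume.

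The paper's proof avoids unit steps entirely and uses a cascade of $d$ scales rather than your two. Its key lemma: with $g(x)=(2x)^d$, every $m\in\N$ can be written greedily as
\[
m=g(x_0)+g'(x_1)+g''(x_2)+\cdots+g^{(d-1)}(x_{d-1})+m_d,
\qquad x_0\ge x_1\ge\cdots\ge x_{d-1}\ge 0,\quad 0\le m_d\le 2^dd!,
\]
so each residual drops by one polynomial degree and the final residual is bounded by a \emph{constant}. Each term $g^{(i)}(x_i)$ is then realized as the $d!$-volume of an explicit lattice simplex attached near the corner point $2x_0e_i$ of the cut-off simplex $\conv\{2x_0e_1,\ldots,2x_0e_d\}$, these simplices being pairwise internally disjoint; the constant residual $m_d$ (and likewise the easy case $m\le r$) is handled in one shot by a single thin ``skew'' simplex whose edge vectors have determinant exactly $m_d$ --- a unimodular base times an integer height. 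This single-sliver device is the idea your proposal is missing: any integer up to about $r$ can be cut off by \emph{one} simplex, so once the multi-scale decomposition brings the residual down to a constant, no incremental $\pm 1$ moves are ever needed.
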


This result implies Theorem~\ref{th:main} for $r> r_0$ with $c=2^dd!$ for instance. For $r\le r_0$ the theorem holds by
choosing the constant $c$ large enough.

It will be more convenient to work with
\[
m(P)=d!\vol(S^d(r)\setminus P)
\]
which we call the {\sl missed volume} of $P \in \p^d(r)$. (It would be more appropriate to call
it missed volume times $d!$ though.) With this notation Theorem~\ref{th:missed} says
that the set $\{m(P):P \in \p^d(r)\}$, that is, the set of missed volumes, contains all integers between
$0$ and $(r-2^dd!)^d$ provided $r>r_0$.

The proof is based on the following

\begin{lemma}\label{l:g-func} Assume $d\ge 2$ and let $g(x)=(2x)^d$ and $m \in \N$. Then there are integers $x_0\ge x_1 \ge \ldots \ge x_{d-1}\ge 0$
and an integer $m_d \in \{0,1,\ldots,2^dd!\}$ such that
\[
m=g(x_0)+g'(x_1)+g''(x_2)+\ldots +g^{(d-1)}(x_{d-1})+m_d.
\]
\end{lemma}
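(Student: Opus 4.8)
The plan is to build the representation greedily and to control the successive remainders by a single monotone invariant powered by Taylor's formula. First I would record the shape of the derivatives: since $g(x)=(2x)^d$, we have $g^{(k)}(x)=2^d\frac{d!}{(d-k)!}\,x^{d-k}$ for $0\le k\le d$, so each $g^{(k)}$ is nondecreasing on $x\ge 0$, vanishes at $x=0$ for $k<d$, and $g^{(d)}\equiv 2^dd!$ is exactly the constant that will eventually bound the leftover term $m_d$. The only analytic input is the exact Taylor identity for polynomials, namely $g^{(k)}(x+1)=\sum_{j=k}^{d}\frac{g^{(j)}(x)}{(j-k)!}$ for every integer $x\ge 0$, which holds because $g^{(k)}$ is a polynomial of degree $d-k$.

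Next I would define the greedy choices. Set $m_0=m$ and let $x_0$ be the largest integer with $g(x_0)\le m_0$; having chosen $x_0\ge\cdots\ge x_{k-1}$ with remainder $m_k=m-\sum_{j<k}g^{(j)}(x_j)\ge 0$, let $x_k$ be the largest integer in $\{0,1,\ldots,x_{k-1}\}$ with $g^{(k)}(x_k)\le m_k$. This is always possible since $g^{(k)}(0)=0\le m_k$ for $k\le d-1$, and it automatically enforces $x_0\ge x_1\ge\cdots\ge x_{d-1}\ge 0$ together with $m_k\ge 0$. Writing $m_d=m_{d-1}-g^{(d-1)}(x_{d-1})$ gives $m=\sum_{k=0}^{d-1}g^{(k)}(x_k)+m_d$, so the whole lemma reduces to proving $0\le m_d\le 2^dd!$.

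The heart of the argument is the invariant
\[
m_k\le \sum_{j=k}^{d-1}g^{(j)}(x_{k-1})+2^dd!\qquad(1\le k\le d),
\]
which says that the remaining budget, with all later arguments capped at $x_{k-1}$, is never exceeded. For $k=1$ it follows from $m_1\le g(x_0+1)-g(x_0)-1<\sum_{j=1}^{d}\frac{g^{(j)}(x_0)}{j!}\le\sum_{j=1}^{d-1}g^{(j)}(x_0)+2^d$. For the inductive step, assume it at level $k$. If $x_k=x_{k-1}$ (the case $g^{(k)}(x_{k-1})\le m_k$), subtracting $g^{(k)}(x_{k-1})$ from both sides yields the invariant at level $k+1$ at once. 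If instead $x_k<x_{k-1}$, then $g^{(k)}(x_k)\le m_k<g^{(k)}(x_k+1)$, whence $m_{k+1}<g^{(k)}(x_k+1)-g^{(k)}(x_k)=\sum_{j=k+1}^{d}\frac{g^{(j)}(x_k)}{(j-k)!}\le\sum_{j=k+1}^{d-1}g^{(j)}(x_k)+2^dd!$, again the invariant at level $k+1$. Taking $k=d$ collapses the sum to the empty sum and gives $m_d\le 2^dd!$, while $m_d\ge 0$ holds by construction, so the lemma follows.

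The main obstacle, and the reason the constant in the statement is precisely $2^dd!$, is selecting an invariant that is simultaneously preserved by the greedy step and tight enough at $k=d$ to pin $m_d$ below $2^dd!$. The Taylor identity is exactly what reconciles these demands: the degree-$d$ term $g^{(d)}=2^dd!$ is the surplus that no further differentiation can remove, so it is this very quantity that surfaces as the final bound. I expect nothing beyond bookkeeping once the invariant is fixed, since the only estimates used are the trivial $\frac{g^{(j)}(x)}{(j-k)!}\le g^{(j)}(x)$ and $\frac{2^dd!}{(d-k)!}\le 2^dd!$.
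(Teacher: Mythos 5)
Your proof is correct, and the object you construct is the same greedy decomposition as in the paper; the difference lies in how the remainder is controlled. The paper runs the greedy step unconstrained (it takes $x_i$ to be the largest integer with $g^{(i)}(x_i)\le m_i$, with no cap), then proves $x_i\le x_{i-1}$ by contradiction using the mean value theorem: $m_i<g^{(i-1)}(x_{i-1}+1)-g^{(i-1)}(x_{i-1})=g^{(i)}(\xi)\le g^{(i)}(x_{i-1}+1)$, and the same one-line computation at the final step gives $m_d<g^{(d)}(\xi)=2^dd!$ outright, so no invariant needs to be carried along. You instead cap the greedy choice at $x_{k-1}$ (making the monotonicity $x_0\ge x_1\ge\cdots\ge x_{d-1}$ automatic) and then must handle the possibility that the cap binds, which you do with the inductive invariant $m_k\le\sum_{j=k}^{d-1}g^{(j)}(x_{k-1})+2^dd!$ and the discrete Taylor identity $g^{(k)}(x+1)=\sum_{j=k}^{d}g^{(j)}(x)/(j-k)!$. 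Your route is purely algebraic, avoiding the mean value theorem, and it is robust: the two cases of your induction cover the binding and non-binding cap uniformly. The paper's route is shorter because its contradiction argument shows the cap is in fact never active (the unconstrained greedy already satisfies $x_i\le x_{i-1}$), so the case analysis you carry through the induction never actually materializes; your Case 1 with a strictly binding cap is vacuous, though nothing in your argument depends on knowing that. Both arguments generalize verbatim to any degree-$d$ polynomial whose derivatives are positive for $x>0$, as the paper remarks after its proof.
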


{\bf Proof.} We give an algorithm that outputs the numbers $x_0,\ldots,x_{d-1}$ and $m_0=m,m_1,\ldots,m_d$.

Start with $m_0=m$ and let $x_0$ be the unique non-negative integer with $g(x_0)\le m_0<g(x_0+1)$. If $m_{i-1}$ and $x_{i-1}$ have been defined then set $m_i=m_{i-1}-g^{(i-1)}(x_{i-1})$ and let $x_i$ be the unique non-negative integer with $g(x_i)\le m_i<g(x_i+1)$. We stop with $m_{d-1}$ and $x_{d-1}$ and define $m_d=m_{d-1}-g^{(d-1)}(x_{d-1})$.

We claim that $x_i\le x_{i-1}$. Note first that by construction and by the intermediate value theorem
\begin{eqnarray*}
m_i&=&m_{i-1}-g^{(i-1)}(x_{i-1})<g^{(i-1)}(x_{i-1}+1)-g^{(i-1)}(x_{i-1})\\
   &=&g^{(i)}(\xi)\le g^{(i)}(x_{i-1}+1),
\end{eqnarray*}
where $\xi \in [x_{i-1},x_{i-1}+1]$, and we also used that $g^{(i)}(x)$ is increasing for $x\ge 0$. So if, contrary to the claim,
we had $x_i > x_{i-1}$, then $x_{i-1}+1\le x_i$. As $g^{(i)}(x)$ is increasing we have
\[
m_i < g^{(i)}(x_{i-1}+1) \le g^{(i)}(x_i)\le m_i,
\]
a contradiction.

The same method gives that $m_d\le 2^dd!$:
\begin{eqnarray*}
m_d&=&m_{d-1}-g^{(d-1)}(x_{d-1})<g^{(d-1)}(x_{d-1}+1)-g^{(d-1)}(x_{d-1})\\
    &=&g^{(d)}(\xi)=2^dd!,
\end{eqnarray*}
for all $x$. The proof is finished by adding the defining equalities $m_i=m_{i-1}-g^{(i-1)}(x_{i-1})$ for $i=1,2,\ldots,d$.\hfill$\Box$

\medskip
{\bf Remark.} The same method works for every polynomial $g$ of degree $d$ such that $g^{(i)}(x)>0$ for all $i=0,1,\ldots,d$ and $x>0$.

\medskip
We return now to the {\bf proof} of Theorem~\ref{th:missed}. So given $r >r_0$ and $m \in \{0,1,\ldots,(r-2^dd!)^d\}$ we are going to construct $P \in \p(r)$ with $m(P)=m$. This is easy if $m \le r$: the simplex $\tri$ with vertices $me_1,e_2,\ldots,e_d$ has $v(\tri)=m$ so the missed volume of the closure of $S(r)\setminus \tri$, which is a lattice polytope, is $m$. So assume $m > r$.

Apply Lemma~\ref{l:g-func} with $m$ which is at most $(r-2^dd!)^d$, to get numbers $x_0,\ldots,x_{d-1}$ and $m_d$. Note that $2x_0 \le r-2^dd!$ and also $x_0\ge 2$ as $m>r\ge r_0=2^dd!$. Next we are going to define simplices $\tri_0,\ldots,\tri_d$ that are lattice polytopes contained in $S^d(r)$,
are pairwise internally disjoint, and $v(\tri_i)=g^{(i)}(x_i)$ for $i=0,1,\ldots,d-1$ and $v(\tri_d)=m_d$.

We set $e_i^*=2x_0e_i$ and define $\tri_0=\conv \{e_1^*,\ldots,e_d^*\}$, a non-degenerate simplex because $x_0\ge 2$. Clearly,
$v(\tri_0)=2^dx_0^d=g(x_0)$ and $\tri_0 \subset S(r)$.

Next, for $i \in [d]$, we let $\tri_i$ be the convex hull of vectors
\begin{eqnarray*}
&\;&e_i^*,e_i^*+2^dd(d-1)\ldots(d-i+1)e_i \mbox{ and }\\
&\;&e_i^*+(e_j-e_i)\; (j<i) \mbox{ and } e_i^*+x_i(e_j-e_i)\; (j>i),
\end{eqnarray*}
this is a simplex whose vertices are lattice points in $S(r)$, as one can check easily. Its edges starting from vertex $e_i^*$
are the vectors $e_j-e_i$ for $j<i$, $2^dd(d-1)\ldots(d-i+1)e_i$, and $x_i(e_j-e_i)$ for $j>i$. As the vectors
$e_1-e_i,\ldots,e_{i-1}-e_i,e_i,e_{i+1}-e_i,\ldots,e_d-e_i$ form a basis of the lattice $\zz$,
\[
v(\tri_i)=2^dd(d-1)\ldots(d-i+1)x_i^{(d-i)}=g^{(i)}(x_i).
\]
Note that $\tri_i$ may be degenerate (exactly when $x_i=0$) but only for $i>0$.

These simplices are internally disjoint. Indeed, $\tri_0$ lies on one side of the hyperplane $\aff\{e_1^*,\ldots,e_d^*\}$
and all other simplices are on the other side. Further, as $2x_0 \le r-2^dd!$ and $x_i \le x_0$,
every $\tri_i$ with $i>0$ is contained in the simplex whose vertices are $e_i^*,e_i^*+2^dd!e_i$ and
$\frac 12 (e_i^*+e_j^*)$ for $j\ne i$, and these larger simplices are internally disjoint.

We check next that the closure of $S(r) \setminus \bigcup_0^d \tri_i$ is a lattice polytope $P$ in $\p(r)$.
Write $X_0$ for the set of vertices of $\tri_0$ except $0$, and $X_i$ for the set of vertices of $\tri_i$ except $e_i^*$.
Let $Y$ be the set of vertices of $A(r)$.
$P$ is obtained from $S(r)$ by deleting $\tri_0,\tri_1,\ldots,\tri_d$ in this order. Deleting $\tri_0$ results
in $P_0=\conv (Y \cup X_0)\in \p(r)$. Deleting $\tri_1$ from $P_0$ gives $P_1=\conv (Y \cup X_0\cup X_1) \in \p(r)$. Similarly, deleting $\tri_i$ from $P_{i-1}$ results in a convex lattice polytope $P_i \in \p(r)$ whose vertices are $(Y \cup \bigcup_0^iX_i)$. This works even if
$\tri_i$ is degenerate; then $P_i=P_{i-1}$.

Finally we have $P=P_d \in \p(r)$ and the missed volume of $P$ is
\[
m(P)=\sum_0^d v(\tri_i)=\sum_0^{d-1}g^{(i)}(x_i-1)+m_d=m.
\]

This finishes the construction and gives a polytope $P \in \p(r)$ with $m(P)=m$ if $r\ge r_0$.
\hfill$\Box$

\medskip
{\bf Remark.} The same construction with minor modification works for all $m\le (r-2^dd)^d$ and $r\ge r_0=2^dd$.

\section{The integer convex hull}\label{sec:intconv}

Suppose $K \subset \rr$ is a bounded convex set. Its {\sl integer convex hull}, $I(K)$, is
defined as
\[
I(K)=\conv (K \cap \zz),
\]
which is a convex lattice polytope if nonempty.

To avoid some trivial complications we assume that $r$ is large enough, $r\ge r_d$ say.
One important ingredient of our construction is
\[
Q_r=I(rB^d)=\conv (\zz \cap rB^d).
\]
Trivially $\vol Q_r \le \om_dr^d$. It is proved in B\'ar\'any and Larman in \cite{bl} that
$\vol (rB_d \setminus Q_r) \ll r^{d\frac {d-1}{d+1}}$. The last exponent will appear so often
that we write $D=d\frac {d-1}{d+1}$. The number of vertices of $Q_r$ is estimated in
\cite{bl} as
\[
r^D \ll f_0(Q_r) \ll r^D.
\]

The vertices of $Q_r$ are very close to the boundary of $rB^d$. More precisely, we have the following estimate which is also used in~\cite{bl}.
\begin{lemma}\label{l:close} If $x$ is a vertex of $Q_r$, then $r-\|x\| \ll r^{-(d-1)/(d+1)}$.
\end{lemma}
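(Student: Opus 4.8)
The plan is to reduce the lemma to a bound on the height of an empty spherical cap. Let $x$ be a vertex of $Q_r$ and let $u$ be the outer unit normal of a supporting hyperplane $H$ of $Q_r$ at $x$, so that $Q_r\subseteq\{y:\langle y,u\rangle\le\langle x,u\rangle\}$. Put $h=r-\langle x,u\rangle\ge 0$. Since $\langle x,u\rangle\le\|x\|\le r$, we have $r-\|x\|\le r-\langle x,u\rangle=h$, so it suffices to prove $h\ll r^{-(d-1)/(d+1)}$. The gain of this reduction is that $h$ is the height of the spherical cap cut off by $H$, an object whose geometry we can exploit directly.

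Next I would record the emptiness property that drives everything. Because every point of $\zz\cap rB^d$ lies in $Q_r$, hence satisfies $\langle y,u\rangle\le\langle x,u\rangle$, the open cap $C=\{y:\|y\|<r,\ \langle y,u\rangle>\langle x,u\rangle\}$ contains no lattice point. Thus the lemma is equivalent to the existence of a constant $K=K_d$ such that every cap of height $h\ge K r^{-(d-1)/(d+1)}$ contains a lattice point. Arguing by contradiction I may assume $h\le r$, so the cap is a genuine (not over-half) cap with base radius $\rho=\sqrt{2rh-h^2}\asymp\sqrt{rh}$ and volume $\vol C\asymp r^{(d-1)/2}h^{(d+1)/2}$. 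Setting this volume $\asymp 1$ already pinpoints the critical scale $h\asymp r^{-(d-1)/(d+1)}$, which is exactly the exponent to be proved and shows the estimate is of the right order.

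The remaining step is to turn ``large cap'' into ``cap contains a lattice point,'' and this is the crux. For levels $s\in[r-h,\,r-\tfrac h2]$ the cross-section of $C$ perpendicular to $u$ is a $(d-1)$-dimensional ball of radius $\gtrsim\sqrt{rh}$, so once $h\ge K r^{-(d-1)/(d+1)}$ these cross-sections are genuinely large in all $d-1$ transverse directions. I would then produce a lattice point in $C$ by a lattice-point count over these nested wide cross-sections, exploiting the integrality of $r$ (which controls the position of the cap relative to the lattice along the axial direction), in the spirit of the estimate used in~\cite{bl}. The main obstacle is precisely this: the cap is thin (thickness $\asymp h\ll 1$) in the direction $u$, so for some orientations an empty thin cap genuinely exists, and neither a volume bound nor a lattice-width/flatness bound suffices — each of those yields only $h\ll 1$. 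The sharp exponent must come from a genuine discrepancy estimate for $\zz$ in the cap that uses both the $\sqrt{rh}$-wide $(d-1)$-dimensional cross-sections and the size of $r$, which is the technical heart shared with~\cite{bl}.
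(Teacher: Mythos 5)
There is a genuine gap, and it is fatal to the route you chose rather than a missing technicality. Your reduction keeps only one piece of information: that the open cap $C$ beyond a supporting hyperplane at $x$ is lattice-free. The target you then formulate --- every lattice-free cap of $rB^d$ has height $\ll r^{-(d-1)/(d+1)}$ --- is false (so in particular it is not ``equivalent'' to the lemma; the lemma is true). Concretely, take $u=(e_1+e_2)/\sqrt2$ and $M=\lfloor\sqrt2\,r\rfloor$. For every lattice point $y\in rB^d$ the number $y_1+y_2$ is an integer and $y_1+y_2\le\sqrt{2}\,\|y\|\le\sqrt2\,r$, hence $y_1+y_2\le M$; thus the cap $\{y\in rB^d:\langle y,u\rangle>M/\sqrt2\}$ contains no lattice point, yet its height is $r-M/\sqrt2=(\sqrt2\,r-M)/\sqrt2$, which is of order $1$ for typical $r$. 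This also shows that no discrepancy or counting estimate can rescue the formulation, and that integrality of $r$ does not help: the example persists for integer $r$ (whenever $\sqrt2\,r-\lfloor\sqrt2\,r\rfloor\ge\frac12$, say), and in any case the lemma is applied in Section~\ref{sec:cor} with $r$ defined by a volume equation, so $r$ is not an integer there. This phenomenon is exactly why the lemma speaks only of \emph{vertices}: the boundary of $Q_r$ does come within distance of order $1$ of the sphere near facets with such rational normals, so any argument that forgets that $x$ is a lattice point must fail.

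The missing idea is to exploit $x\in\zz$ itself, by applying Minkowski's theorem to a body centered at $x$ instead of hunting for lattice points in a cap. This is the paper's proof: the lens $L=rB^d\cap(2x-rB^d)$ is convex and centrally symmetric about $x$; if $L$ contained a lattice point $z\ne x$, then $z$ and $2x-z$ would both lie in $rB^d\cap\zz$ and $x=\frac12\bigl(z+(2x-z)\bigr)$ could not be a vertex of $Q_r$. Minkowski's classical theorem therefore forces $\vol L<2^d$. Since $L$ is the union of two congruent caps of height $h=r-\|x\|$, your own volume computation gives $\vol L\gg r^{(d-1)/2}h^{(d+1)/2}$, and combining the two bounds yields $h\ll r^{-(d-1)/(d+1)}$. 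So the heuristic in your second paragraph (cap volume $\asymp r^{(d-1)/2}h^{(d+1)/2}$, critical scale where this is constant) is exactly the right computation; what converts ``volume at least a constant'' into ``contains a lattice point'' is central symmetry about the lattice point $x$ --- a property a cap does not have, and precisely the property your reduction discards.
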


{\bf Proof.} The set $rB^d \cap (2x-rB^d)$ is convex and centrally symmetric with center $x \in \zz$.
It does not contain any lattice point $z\ne x$: if it does, then both $z$ and $2x-z$ are lattice points in $rB^d$
and $x=\frac 12 \left(z+(2x-z)\right)$ is not a vertex of $Q_r$. By Minkowksi's classical theorem,
$\vol (rB^d \cap (2x-rB^d))< 2^d$. The estimate in the lemma follows from this by a simple computation.\hfill$\Box$

\medskip
We let $\rr_+$ denote the set of $x=(x_1,\ldots,x_d)\in \rr$ with $x_i\ge 0$ for every
$i\in[d]$. In the proof of Corollary~\ref{cor:main} we will consider $Q^r=Q_r \cap \rr_+$.
It is clear that $Q^r=I(rB^d \cap \rr_+)$ and further, that
\[
\vol \left((rB^d \cap \rr_+)\setminus Q^r\right) \ll r^D.
\]

Let $X$ be the set of those vertices $x=(x_1,\ldots,x_d)$ of $Q^r$ for which $x_i>0$ for all
$i \in [d]$. We claim that
\[
r^D \ll |X| \ll r^D
\]
Only the lower bound needs some explanation. The number of vertices of $Q_r$ with $x_i=0$ for some
$i \in [d]$ is less than $d$ times the number of vertices of $Q_r^{d-1}$ which is of order $r^{(d-1)(d-2)/d}=o(r^D)$.
Then $|X|\ge 2^{-d}\left((f_0(Q_r)-o(r^D)\right)$, so indeed $r^D \ll |X|$.

Lemma~\ref{l:close} says that $\|x\| \ge r-b_1r^{-(d-1)/(d+1)}$ where $b_1>0$ depends only on $d$.
Define $r_0=r-b_1r^{-(d-1)/(d+1)}$, so $X$ lies in the annulus $rB^d \setminus r_0B^d$.
Consequently all lattice points in $r_0B^d\cap \rr_+$ are contained in $Q^r\setminus X$
and so $I(r_0B^d\cap \rr_+)\subset I(Q^r\setminus X)$. The result from \cite{bl} cited above applies
to $I(r_0B^d \cap \rr_+)$ and gives that
\[
\vol \left((r_0B^d \cap \rr_+) \setminus I(r_0B^d \cap \rr_+)\right) \ll r_0^D\ll r^D
\]
as $r_0<r$. This implies that with a suitable constant $b>0$
\[
2^{-d}\omega_d r^d -br^D \le \vol I(Q^r\setminus X) \le 2^{-d}\omega_d r^d.
\]

After these preparation we are ready for the proof of Corollary~\ref{cor:main}.

\section{Proof of Corollary~\ref{cor:main}}\label{sec:cor}

Given a (large enough) number $V$ with $d!V \in \N$ we are going to construct many non-equivalent convex lattice polytopes whose
volume equals $V$. As a first step, we define $r$ via the equation
\[
V=2^{-d}\omega_d r-br^D.
\]

Consider $Q^r$ from the previous section. For $Z \subset X$ we define $Q(Z)=I(Q^r \setminus Z)$, $Q(Z)$ is a convex lattice polytope, it contains $Q(X)$.
So $V \le \vol Q(X) \le \vol Q(Z) \le 2^{-d}\omega_d r^d$. Set $m(Z)=\vol Q(Z) -V$. Then
\[
m(Z)=\vol Q(Z)-V \le 2^{-d}\omega_dr^d -V=br^D.
\]

Since $r^D\ll |X|$, the number of polytopes $Q(Z)$ is $2^{|X|} \ge \exp\{b_2r^D\}=\exp\{b_3 V^{(d-1)/(d+1)}\}$ with suitable positive constants $b_2,b_3$. This is what we need in Corollary~\ref{cor:main}. But the volumes of the $Q(Z)$ are larger than $V$. So we are going to cut off volume $\vol Q(Z)-V=m(Z)/d!$ from $Q(Z)$ so that what is left is a lattice polytope of volume exactly $V$. We will do so using Theorem~\ref{th:main} or rather Theorem~\ref{th:missed} the following way.

\smallskip
Set $\rho= \lfloor r/10 \rfloor \in \N$ and assume $r_d$ is so large that $\rho >r_0=2^dd!$.
Consider $A(\rho)$, $S(\rho)$ and $\p(\rho)$ from Section~\ref{sec:introd}. Theorem~\ref{th:missed} says that
given if $m=d!m(Z)\le (\rho-2^dd!)^d$, there is a polytope $P=P(Z) \in \p$ with $m(P)=m$. Then
\[
P^*(Z)=\left[Q(Z)\setminus S(r)\right] \cup P(Z)=Q(Z)\setminus \left[S(r) \setminus P(Z)\right]
\]
is a convex lattice polytope, and its volume is exactly $V$.

\smallskip
Now we have $\exp\{b V^{(d-1)/(d+1)}\}$ convex lattice polytopes $P^*(Z)$,
each of volume $V$. We claim that any one of them is equivalent to at most $d!$ other $P^*(W)$. This will
clearly finish that proof of $\log N_d(V) \gg V^{(d-1)/(d+1)}$.

To prove this note first that $P^*(Z)$  has a long edge on the segment $[0,re_i]$ for all $i\in [d]$.
This edge is of the form $E_i=E_i(Z)=[\alpha_i e_i,\lfloor r \rfloor e_i]$ and $\alpha_i\le \rho \le r/10$.
It is quite easy to check (we omit the details) that $P^*(Z)$ has exactly
these $d$ edges whose length is larger than $0.9r$. So if $P^*(Z)$ and $P^*(W)$ are equivalent, then the lattice preserving affine
transformation $T$ that carries $P^*(Z)$ to $P^*(W)$, has to map each $E_i(Z)$ to a uniquely determined $E_j(W)$.
As the lines containing $E_i(Z)$ all pass through the origin, $T(0)=0$ and so $T$ is a linear map.
Thus $T$ permutes the axes and is lattice preserving. Then it has to permute the vectors $e_1,\ldots,e_d$.
There are $d!$ such transformations. Consequently, $P^*(Z)$ is equivalent with at most $d!$ other
polytopes of the form $P^*(W)$.\hfill$\Box$

\medskip
{\bf Remark.} The above construction can be modified so that all $P^*(Z)$ are non-equivalent. Namely, set $t=\lfloor r \rfloor$ and
replace $Z\subset X$ by
\[
Z^0=Z\cup \bigcup_1^d \{te_i,(t-1)e_i,\ldots, (t-i+1)e_i\}.
\]
Set $Q(Z^0)=I(Q^r \setminus Z^0)$ and $P^*(Z^0)=\left[Q(Z^0)\setminus S(r)\right]\cup P(Z^0)$ where $P(Z^0) \in \p(\rho)$
is again chosen so that $\vol P^*(Z^0)=V$. The long edges of $P^*(Z^0)$ are almost the same $E_i(Z)$ except that this time each carries
a `marker', namely the last $i$ lattice points are missing from $E_i(Z)$. So if $P^*(Z^0)$ and $P^*(W^0)$ are equivalent, then the corresponding lattice preserving affine transformation has to be the identity.

\section{Gaps in $v(\p)$}\label{sec:gaps}

 Let $A_k=\{(x_1, x_2, \cdots, x_d): x_1+x_2+\cdots x_d=k,
x_i\in \{0, 1, 2, \cdots, \\ k \}, i\in [d] \}$, where $k=0, 1,
\cdots, r$. For any $P \in \p(r)$, if $P \cap A_{r-2} \neq
\emptyset$, then clearly $v(P)\geq 2 r^{d-1}$. Now suppose that $P
\cap A_{r-2} = \emptyset$ but $P\cap  A_{r-1} \neq \emptyset$.
Assume that $P=\conv \{ A(r)\cup B\}$, where $B \subset A_{r-1}$,
$B=\{b_1, b_2, \cdots, b_t\}$,
 $b_i=(b^i_1, b^i_2, \cdots, b^i_d)$,  $b^i_j
 \in \{0, 1, 2, \cdots, r-1 \}$ for all $i\in [t]$ and $j\in [d]$.
 For any $b_i, b_k \in B$,  let $q_{ik}=\max\{|b^i_j - b^k_j| :  j\in [d]\}$
  and  $q= \max\{q_{ik} : b_i, b_k\in B\}$.

\begin{theorem}\label{them-gapd}
For any $d\geq 3$ and $P \in \p(r)$, $v(P)\notin [r^{d-1}+1,
r^{d-1}+r^{d-2}-1]$.
\end{theorem}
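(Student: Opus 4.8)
The plan is to split according to the lowest layer $A_k$ that $P$ meets. If $P\cap A_{r-2}\neq\emptyset$, then, as noted just before the statement, $v(P)\ge 2r^{d-1}$; since $2r^{d-1}\ge r^{d-1}+r^{d-2}$ for $r\ge 1$, such a $v(P)$ lies \emph{above} the excluded interval, and the same holds a fortiori if $P$ reaches a still lower layer, as this only enlarges $P$. If instead $P\cap A_{r-1}=\emptyset$ as well, then $P=A(r)$ and $v(P)=0$, which is below the interval. Thus the whole content is the case already isolated in the excerpt: $P=\conv (A(r)\cup B)$ with $\emptyset\neq B\subseteq A_{r-1}$, where $A(r)$ is the top face (layer $r$) and $\conv B$ the bottom face (layer $r-1$), lying in parallel hyperplanes.

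To evaluate $v(P)$ I would slice $P$ by the hyperplanes $\{\sum_i x_i=r-s\}$, $s\in[0,1]$. A point of $P$ on such a hyperplane is a convex combination of a point of $A(r)$ and a point of $\conv B$ with weights $1-s$ and $s$, so the slice is the Minkowski combination $(1-s)A(r)+s\,\conv B$. Writing its $(d-1)$-area through the mixed-volume expansion and integrating (each monomial $(1-s)^{d-1-j}s^j$ integrates to $\frac{(d-1-j)!\,j!}{d!}$, so every term contributes the same factor $\frac1d$) gives
\[
v(P)=\frac{(d-1)!}{\sqrt d}\sum_{j=0}^{d-1}V\!\left(A(r)[d-1-j],\ \conv B\,[j]\right),
\]
a sum of mixed volumes in the hyperplane carrying $A(r)$, where $K[m]$ indicates that $K$ is repeated $m$ times. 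The term $j=0$ equals $\frac{(d-1)!}{\sqrt d}\,\mathrm{Vol}_{d-1}(A(r))=r^{d-1}$, and all remaining terms are nonnegative. In particular, if $\conv B$ is a single point (e.g. $|B|=1$) then every $j\ge 1$ term vanishes and $v(P)=r^{d-1}$, below the interval. So it remains to show that as soon as $\conv B$ is at least one-dimensional, the excess $v(P)-r^{d-1}$ is at least $r^{d-2}$.

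For this I would keep only the $j=1$ term and use monotonicity of mixed volumes: choosing two distinct lattice points $p,p'\in B$ and setting $u=p'-p$ (a nonzero lattice vector with $\sum_i u_i=0$), we have $V(A(r)[d-2],\conv B)\ge V(A(r)[d-2],[p,p'])$. The mixed volume with a segment is the shadow formula $(d-1)\,V(A(r)[d-2],[0,u])=|u|\,\mathrm{Vol}_{d-2}(\pi_{u^\perp}A(r))$, and Cauchy's facet-projection formula for the simplex $A(r)$ evaluates the right side exactly as $\frac{\sqrt d}{2(d-2)!}\,r^{d-2}\|u\|_1$. Substituting back yields $v(P)-r^{d-1}\ge \tfrac12 r^{d-2}\|u\|_1$. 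Since $u\neq 0$ has zero coordinate sum it carries both a positive and a negative entry, so $\|u\|_1\ge 2$, whence $v(P)\ge r^{d-1}+r^{d-2}$, again above the interval.

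The main obstacle is this last step: one needs not an order-of-magnitude estimate but the \emph{sharp} constant, since the interval ends exactly at $r^{d-1}+r^{d-2}-1$ and the value $r^{d-1}+r^{d-2}$ is attained from above by the concrete choice $B=\{(r-1)e_1,(r-2)e_1+e_2\}$. Thus the evaluation of $|u|\,\mathrm{Vol}_{d-2}(\pi_{u^\perp}A(r))$ must produce the precise coefficient $\frac{\sqrt d}{2(d-2)!}\|u\|_1$ (equivalently, one must verify that $\min\|u\|_1=2$ over nonzero lattice directions in the hyperplane); any slack would open or close the gap incorrectly. A more elementary alternative, avoiding mixed volumes, is to triangulate $\conv (A(r)\cup\{p,p'\})$ directly and compute the relevant determinants, but the mixed-volume and Cauchy computation is what makes the exact constant transparent.
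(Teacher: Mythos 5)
Your proof is correct, and it takes a genuinely different route from the paper's. The opening case analysis (lower layers give $v(P)\ge 2r^{d-1}$, empty $B$ gives $P=A(r)$, $|B|=1$ gives $v(P)=r^{d-1}$) coincides with the paper's, but for the decisive case $|B|\ge 2$ the paper uses exactly the ``more elementary alternative'' you mention in your last sentence: after permuting coordinates so that $b^1_d-b^2_d=q=\max_j|b^1_j-b^2_j|\ge 1$, it exhibits inside $P$ the two internally disjoint simplices $P_1=\conv\{A(r)\cup\{b_1\}\}$ and $P_2=\conv\{re_1,\dots,re_{d-1},b_1,b_2\}$, evaluates $v(P_2)=qr^{d-2}$ by a single determinant, and concludes $v(P)\ge r^{d-1}+qr^{d-2}\ge r^{d-1}+r^{d-2}$. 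Your slicing computation is heavier (Minkowski-combination slices, mixed-volume expansion, monotonicity, the segment-projection identity, Cauchy's formula), but it buys more: the exact identity $v(P)=\frac{(d-1)!}{\sqrt d}\sum_{j=0}^{d-1}V\left(A(r)[d-1-j],\conv B[j]\right)$, the sharp constant you were worried about (your coefficient $\frac{\sqrt d}{2(d-2)!}\,r^{d-2}\|u\|_1$ is correct --- I verified it via the facet normals $-e_i+\frac1d\mathbf{1}$ of $A(r)$, which give $\langle n_{F_i},u\rangle\propto -u_i$ for zero-sum $u$), and in fact a strictly stronger bound for $d\ge 4$, since $\frac12\|u\|_1\ge\max_j|u_j|=q$ with strict inequality possible. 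Incidentally, your exact formula exposes a small flaw in the paper's argument: the paper asserts the equality $v(P)=v(P_1)+v(P_2)=r^{d-1}+qr^{d-2}$ when $|B|=2$, which is false in general for $d\ge 4$ --- e.g.\ for $u=b_2-b_1=(2,1,-1,-2)$ the point $b_2$ lies beyond two facets of $P_1$, not one, and $v(P)=r^{d-1}+\frac12\|u\|_1r^{d-2}=r^{d-1}+3r^{d-2}>r^{d-1}+2r^{d-2}$; only the inequality $v(P)\ge v(P_1)+v(P_2)$ holds and is needed, so the theorem itself is unaffected (and in $d=3$, where $\frac12\|u\|_1=\|u\|_\infty$ for zero-sum vectors, the paper's equality is true, which matters for its later three-dimensional refinements).
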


\begin{proof}
 If $|B|=1$, then $P$ is actually a $d$-dimensional simplex
with base $A(r)$, which means that $v(P)=r^{d-1}$.

 If $|B|=2$, then suppose that $B=\{b_1, b_2\}$. In this case $q=q_{12}=\max \{|b^1_j-b^2_j|:  j\in [d] \}$ and $q \ge 1$ as $b_1\ne b_2$. Thus  we may assume
 without loss of generality, that $b^1_d - b^2_d= q\ge 1$. Let
 $P_1=\conv\{A(r)\cup \{b_1\} \}$, $P_2=\conv $  $\{re_1, re_2,
 \cdots,$ $re_{d-1}, b_1, b_2\}$. Since $b^1_d \neq b^2_d$,  $P_2$ is a $d$-dimensional
 simplex. It is easy to check that $P_1$ and $P_2$ are internally disjoint which implies  $v(P)=v(P_1)+v(P_2)$.
 Clearly, $v(P_1)=r^{d-1}$. Furthermore, we have $$v(P_2)= \left|\det\left(\begin{array}{ccccc}
    1 &  \ldots & 1 & 1 & 1\\
    re_1 & \ldots & re_{d-1} & b_1 & b_2
  \end{array}\right)\right|=|b^1_d-b^2_d|r^{d-2},$$
  which means that $v(P)=r^{d-1}+q r^{d-2}\geq r^{d-1}+r^{d-2}$. Clearly  $v(P)\geq
  r^{d-1}+r^{d-2}$ still holds for any $P \in \p(r)$ satisfying $|B|\geq
  3$. The proof is complete.
\end{proof}

 Now we consider the special case when $d=3$.
  Define a graph $G=(V(G), E(G))$ such that $V(G)=A_{r-1}$, $E(G)=\{ b_ib_k:
  |b^i_1 - b^k_1|+  |b^i_2 - b^k_2|+ |b^i_3 - b^k_3|=2 \}$. Clearly
$G$ is a triangular grid graph with boundary $\conv\{(r-1)e_1,
(r-1)e_2\}\cup \conv\{(r-1)e_1, (r-1)e_3\} \cup \conv\{(r-1)e_2,
(r-1)e_3\}$. Furthermore, it is not difficult to see that for $b_i,
b_k
  \in A_{r-1}$, $b_ib_k\in E(G)$ if and only if $q_{ik}=1$.

For any $b_i, b_k \in V(G)$, let  $l_{b_ib_k}$ denote the line
 determined by $b_i, b_k$. In the
hyperplane determined by $A_{r-1}$, let $H^+_{ik}$ denote the open
halfplane bounded by $l_{b_ib_k}$ such that $|H^+_{ik}\cap
\{(r-1)e_1, (r-1)e_2, (r-1)e_3\}|=2$ and $H^-_{ik}$ the open
halfplane bounded by $l_{b_ib_k}$ satisfying $|H^-_{ik}\cap
\{(r-1)e_1, (r-1)e_2, (r-1)e_3\}|=1$. Furthermore, let $d_G(b_i,
b_k)$ denote the graphic distance between $b_i$ and $b_k$, i.e., the
length of the shortest paths between $b_i$ and $b_k$. Clearly,
$d_G(b_i, b_k)=q_{ik}$.  For any $b_i\in A_{r-1}$, let
$D_{b_i}(s)=\{b_k: b_k\in A_{r-1}, d_G(b_i, b_k) \leq s\}$.
\begin{theorem} \label{theo-gap3-1}
 If  $d=3$, then for any $P \in \p(r)$,
 $v(P)\notin [r^{2}+r+2,
r^{2}+2r-1]$.
\end{theorem}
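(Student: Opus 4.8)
The plan is to reduce to the structural situation set up just before Theorem~\ref{them-gapd} and then derive an exact formula for $v(P)$ from which the gap is read off directly. Since we only wish to show that a value in $[r^2+r+2,r^2+2r-1]$ is missed, I may assume $v(P)\le r^2+2r-1<2r^2$ (the last inequality holds for $r\ge 2$, as $(r-1)^2>0$). By the observation preceding Theorem~\ref{them-gapd}, $v(P)\ge 2r^{d-1}=2r^2$ whenever $P\cap A_{r-2}\ne\emptyset$; hence $P\cap A_{r-2}=\emptyset$, and we are in the case already isolated there, namely $P=\conv\{A(r)\cup B\}$ with $\emptyset\ne B\subseteq A_{r-1}$. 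Thus $P$ is a prismatoid between the two parallel slices $\{x_1+x_2+x_3=r\}$, which meets $P$ in the full triangle $A(r)$, and $\{x_1+x_2+x_3=r-1\}$, which meets $P$ in $\conv B$.

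Next I would compute $v(P)$ exactly. Parametrise each slice $\{x_1+x_2+x_3=k\}$ by $(x_1,x_2)$ and write $a(F)$ for twice the area of a planar set $F$ in these coordinates, the normalised lattice area (so $a=1$ on a fundamental triangle). With $T_1=\conv\{(0,0),(1,0),(0,1)\}$ the top face is $A(r)=rT_1$, $a(A(r))=r^2$, and the bottom face is $\conv B$. For the convex hull of two lattice polygons in adjacent parallel slices one has
\[
v\big(\conv(F_0\cup F_1)\big)=a(F_0)+a(F_1)+a_{\mathrm{mix}}(F_0,F_1),
\]
where $a_{\mathrm{mix}}$ is the normalised mixed area; and for $F_1=rT_1$ the edge–support formula gives $a_{\mathrm{mix}}(F_0,rT_1)=r\big(\max_{F_0}(x_1+x_2)-\min_{F_0}x_1-\min_{F_0}x_2\big)$. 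Writing $m_j=\min_{b\in B}b_j$ and using $x_1+x_2=(r-1)-b_3$ on the bottom slice, this equals $r\big((r-1)-(m_1+m_2+m_3)\big)$, so that
\[
v(P)=r^2+a(\conv B)+r\big[(r-1)-(m_1+m_2+m_3)\big].
\]
I would confirm this identity by the routine decomposition of the prismatoid into three lattice tetrahedra, and check it against the cases $|B|=1$, $|B|=2$, $B=A_{r-1}$ treated in this section; for $|B|=2$ it reproduces $v(P)=r^2+q_{12}r$ of Theorem~\ref{them-gapd}, since a zero-sum integer triple satisfies $\max_j|v_j|=\tfrac12\sum_j|v_j|$.

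Finally I would run the case analysis on the integer $s:=(r-1)-(m_1+m_2+m_3)\ge 0$, for which $v(P)=r^2+sr+a(\conv B)$ with $s,\,a(\conv B)\ge 0$ integers. If $s=0$ then $\sum_j m_j=r-1=\sum_j b_j$ forces $b_j=m_j$ for every $b\in B$, so $B$ is a single point and $v(P)=r^2$. If $s\ge 2$ then $v(P)\ge r^2+2r>r^2+2r-1$. The decisive case is $s=1$: here $\sum_j(b_j-m_j)=1$ for every $b\in B$, so each $b$ is one of $m+e_1,m+e_2,m+e_3$, i.e.\ $B$ lies in a single unit cell of the slice; hence $\conv B$ is a point, a primitive segment, or that unit triangle, giving $a(\conv B)\in\{0,1\}$ and $v(P)\in\{r^2+r,\,r^2+r+1\}$. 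In every case $v(P)\notin[r^2+r+2,r^2+2r-1]$. The main obstacle is the exactness of the volume formula, not the case analysis: the gap has width $r-2$ and is governed entirely by the jump from configurations with $s=1$ (where $a(\conv B)\le 1$) to those with $s\ge 2$, and the borderline value $r^2+r+1$, attained only by the full unit triangle, sits exactly one step below the gap. The coefficient of $r$ in the mixed term and the additive area term must therefore be pinned down precisely rather than asymptotically, which is where the mixed-area computation—equivalently, the explicit tetrahedral decomposition—does the real work.
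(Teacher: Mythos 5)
Your proof is correct, but it follows a genuinely different route from the paper's. The paper argues by cases on $|B|$ and on $q$: explicit simplex decompositions give $v(P)=r^2+q_{12}r$ when $|B|=2$; monotonicity gives $v(P)\ge r^2+2r$ when $q\ge 2$; and when $q=1$ the triangular-grid graph $G$ forces $|B|=3$ with $\conv B$ a unit triangle homothetic to $\pm A(r)$, yielding exactly the two values $r^2+r+1$ and $r^2+2r+1$. You instead prove a single exact formula: after discarding the case $v(P)\ge 2r^2$, the polytope $P$ is the convex hull of $A(r)$ and $\conv B$ lying in adjacent parallel lattice planes, its slice at parameter $t$ is $(1-t)\conv B+t\,A(r)$, and integrating Minkowski's quadratic expansion of the slice area gives
\[
v(P)=r^2+a(\conv B)+rs,\qquad s=(r-1)-(m_1+m_2+m_3),\quad m_j=\min_{b\in B}b_j,
\]
from which the gap follows by the trichotomy: $s=0$ forces $B$ to be a point ($v=r^2$); $s=1$ confines $B$ to $\{m+e_1,m+e_2,m+e_3\}$ with $m=(m_1,m_2,m_3)$, so $a(\conv B)\in\{0,1\}$ and $v\in\{r^2+r,\,r^2+r+1\}$; and $s\ge 2$ gives $v\ge r^2+2r$. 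I checked the key computations — the normalization $v(P)=a(F_0)+a(F_1)+2A(F_0,F_1)$ for the prismatoid, the support-function evaluation of the mixed area against the triangle, and the identity $\max_j|w_j|=\frac12\sum_j|w_j|$ for integer triples with zero sum — and the formula reproduces all of the paper's values. Your route buys a complete classification of $v(P)$ in this regime, and it would also shorten the proof of Theorem~\ref{theo-gap3-2}: for $s=2$ the set $B$ lies in a lattice triangle of side $2$, so $a(\conv B)\le 4$ and $v(P)\le r^2+2r+4$, while $s\ge 3$ gives $v(P)\ge r^2+3r$, avoiding the figure-based case analysis there. What the paper's route buys is elementary self-containedness (only simplex decompositions) and the bound $v(P)\ge r^{d-1}+qr^{d-2}$, which is what generalizes to all $d$ in Theorem~\ref{them-gapd}. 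One small imprecision you share with the paper: passing from $P\cap A_{r-2}=\emptyset$ to $P=\conv(A(r)\cup B)$ tacitly assumes $P$ has no lattice point below level $r-1$ at all; strictly, one should note that a lattice point of $P$ at any level $k\le r-2$ already forces $v(P)\ge (r-k)r^2\ge 2r^2$, so that all vertices of $P$ lie in $A_r\cup A_{r-1}$.
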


\begin{proof}
If $|B|=1$, clearly $v(P)=r^2$. If $|B|=2$, then by the proof of
Theorem \ref{them-gapd}
  we know that $v(P)\in \{r^2+kr: k=1, 2, \cdots, r-1\}$.  Now suppose that $|B|=
  t$ $(t\geq 3)$ and  $B=\{b_1, b_2, \cdots, b_t\}$, where
  $b_i=(b^i_1, b^i_2,  b^i_3)$, $b^i_j
 \in \{0, 1, 2, \cdots, r-1 \}$ for all $i\in [t]$ and
 $j\in [3]$.

 If $q\geq 2$, assume without loss of generality that $q_{12}=q
 \geq 2$, then we have
  $v(P)\geq v(\conv\{A(r)\cup
  \{ b_1, b_2\} \})$ $= r^{2}+qr$ $\geq r^{2}+2r$.

  Now suppose that $q=1$. Then any two points in $B$ are adjacent in $G$, which
  forces $t=3$ and $\conv B$ is a $2$-dimensional
  simplex homothetic to   $A(r)$. Furthermore,
  $\conv B=\theta_B+ \frac{\lambda_B}{r}A(r)$, where $\lambda_B=\pm 1$.
      Suppose without loss of generality that $l_{b_1b_2}\parallel l_{re_1,
   re_2}$ and the vectors $\overrightarrow{b_1b_2}$ and $\overrightarrow{re_1,re_2}$ have the same direction.
    If $b_3 \in H^-_{12}$, then $\lambda_B=1$ and
  $v(P)=v(\conv \{A(r)\cup \{ b_1, b_2\} \})$ $+v(\conv\{re_3, b_1, b_2,
  b_3\})$ $=r^2+r+1$. If $b_3 \in H^+_{12}$, then $\lambda_B=-1$ and
  $v(P)=v(\conv \{A(r)\cup \{ b_1, b_2\} \})$ $+v(\conv\{re_1,re_2, b_1,
  b_3\})$ $+v(\conv\{re_2, b_1, b_2,
  b_3\})$  $=(r^2+r)+r+1=r^2+2r+1$.

Combining the above discussions we see that there is no $v(P)$ of
$P\in \p(r)$ lying in the interval $[r^{2}+r+2, r^{2}+2r-1]$, and
the proof is complete.
 \end{proof}

\begin{theorem}\label{theo-gap3-2}
If $d=3$ and  $r\geq 6$, then  for any $P \in \p(r)$,  $v(P)\notin
[r^{2}+2r+5, r^{2}+3r-1]$.
\end{theorem}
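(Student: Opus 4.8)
The plan is to show that any $P\in\p(r)$ whose $v(P)$ falls in the stated interval must be the convex hull of $A(r)$ with a point set $B$ lying in the single layer $A_{r-1}$, and then to pin $v(P)$ down exactly through the size of the smallest triangle containing $B$. First I would dispose of the polytopes that reach deep into $S(r)$. If $P$ has a vertex $b$ in a layer $A_k$ with $k\le r-3$, then $\conv\{A(r)\cup\{b\}\}\subseteq P$ is a simplex with base $A(r)$ and apex $b$, whose normalized volume is $(r-k)r^2\ge 3r^2$; and if $P\cap A_{r-2}\ne\emptyset$ then $v(P)\ge 2r^2$, as already observed in the text. Since $r^2+3r-1<2r^2$ for $r\ge 3$, any $P$ with $v(P)\in[r^2+2r+5,\,r^2+3r-1]$ must have all vertices other than $re_1,re_2,re_3$ in $A_{r-1}$. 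Thus $P=\conv\{A(r)\cup B\}$ with $B\subset A_{r-1}$, exactly the situation of the previous theorems.

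The core step is a volume formula for such $P$. Working in the coordinates $(x_1,x_2)$ together with the level $s=x_1+x_2+x_3$ (a unimodular change of variables), $P$ becomes a prismatoid of unit height whose top face is $A(r)$, a triangle of area $r^2/2$, and whose bottom face is $\conv B$. Integrating the cross-sectional area, which at parameter $t\in[0,1]$ equals the Minkowski combination $(1-t)A(r)\oplus t\,\conv B$, gives
\[
v(P)=r^2+2V(A(r),\conv B)+v_2(\conv B),
\]
where $V$ denotes planar mixed area and $v_2(\cdot)$ is twice the planar area. Evaluating the support function of $\conv B$ in the three edge-normal directions of $A(r)$ shows the middle term equals $rQ$, where
\[
Q=(r-1)-\bigl(\mu_1+\mu_2+\mu_3\bigr),\qquad \mu_j=\min_{b\in B}b_j,
\]
is the side length of the smallest triangle homothetic to $A(r)$ that contains $B$ (using $\max_b(b_1+b_2)=(r-1)-\mu_3$). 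Hence
\[
v(P)=r^2+Qr+v_2(\conv B),\qquad 0\le v_2(\conv B)\le Q^2,
\]
the last bound because $\conv B$ sits inside a triangle of area $Q^2/2$. The same formula can be obtained by an elementary triangulation of $P$ over the edges of $\conv B$, in the spirit of the proofs above.

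The gap then drops out of this formula. If $Q\le 1$ then $v(P)\le r^2+r+1$; if $Q=2$ then $v_2(\conv B)\le 4$, so $v(P)\le r^2+2r+4$; and if $Q\ge 3$ then $v(P)\ge r^2+3r$. None of these values lies in $[r^2+2r+5,\,r^2+3r-1]$, which moreover is nonempty precisely when $r\ge 6$, explaining the hypothesis.

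The step I expect to be the real obstacle is the volume formula, and within it the identification of the correct coefficient of $r$: it is the size $Q$ of the bounding triangle, \emph{not} the graph diameter $q$ used in the earlier proofs. The two genuinely differ—a small downward unit triangle has $q=1$ but $Q=2$, which is exactly what produces the value $r^2+2r+1$—so one cannot simply extend the $q$-based bookkeeping of Theorems~\ref{them-gapd} and~\ref{theo-gap3-1}. The argument must track $Q$ throughout, and must justify both the mixed-area evaluation $2V(A(r),\conv B)=rQ$ and the area bound $v_2(\conv B)\le Q^2$; once these are in hand the separation between $Q=2$ and $Q\ge 3$ is forced.
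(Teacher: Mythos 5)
Your proof is correct, and it follows a genuinely different route from the paper's. The paper keeps the graph-diameter parameter $q$ of Theorems~\ref{them-gapd} and~\ref{theo-gap3-1}, reduces to the case $|B|\ge 3$, $q=2$, and then enumerates, with the help of two figures, the admissible configurations $B\subset D_{b_1}(2)\cap D_{b_2}(2)$, computing $v(P)$ for each by explicit simplex decompositions and reading the gap off a summary table of attainable values. You instead prove a single closed formula, $v(P)=r^2+Qr+v_2(\conv B)$, by slicing the prismatoid $\conv(A(r)\cup B)$ (the cross-sections are the Minkowski combinations $(1-t)A(r)\oplus t\,\conv B$, and the unimodular change of variables keeps heights and areas lattice-correct), evaluating the mixed term through support values in the three edge-normal directions of $A(r)$; the gap then drops out of the integrality of $Q$ together with $0\le v_2(\conv B)\le Q^2$. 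I checked your formula against every value the paper computes ($r^2+qr$ for $|B|=2$; $r^2+r+1$ and $r^2+2r+1$ when $q=1$; $r^2+2r,\dots,r^2+2r+4$, $r^2+3r+2$ and $r^2+3r+3$ in the two cases with $q=2$), and all of them agree; your observation that the correct coefficient of $r$ is the bounding-triangle size $Q$ rather than $q$ --- the downward unit triangle has $q=1$ but $Q=2$ --- is precisely the phenomenon the paper absorbs into its case split rather than isolating. As for what each approach buys: the paper's enumeration uses nothing beyond simplex volumes and, as a by-product, determines exactly which values in $[r^2+2r,r^2+2r+4]$ occur, which is what the corollary following Theorem~\ref{theo-gap3-2} records and which your inequality $v_2(\conv B)\le Q^2$ alone does not deliver; your argument is shorter, figure-free, and strictly more general, since for every integer $k\ge 1$ it gives $v(P)\in[r^2+kr,\,r^2+kr+k^2]$ when $Q=k$ and $v(P)\ge r^2+(k+1)r$ when $Q\ge k+1$, hence a whole family of gaps $[r^2+kr+k^2+1,\,r^2+(k+1)r-1]$, nonempty as soon as $r\ge k^2+2$, recovering Theorems~\ref{theo-gap3-1} and~\ref{theo-gap3-2} simultaneously.
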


\begin{proof} Now still suppose that $|B|=
  t$  and  $B=\{b_1, b_2, \cdots, b_t\}$, where
  $b_i=(b^i_1, b^i_2,  b^i_3)$, $b^i_j
 \in \{0, 1, 2, \cdots, r-1 \}$ for all $i\in [t]$ and
 $j\in [3]$.

 If $q\geq 3$, say, $q_{12}=q\geq 3$, then
  $v(P)\geq v(\conv\{A(r)\cup \{b_1, b_2\})$ $=r^2+qr \geq r^2+3r$.

If $q\leq 2$, then combining the proof of Theorem \ref{theo-gap3-1},
we only need to consider the cases when  $B$ satisfies $t\geq 3$ and
$q=2$.

\begin{figure}[htbp]
\centering
\includegraphics[height=4.5cm]{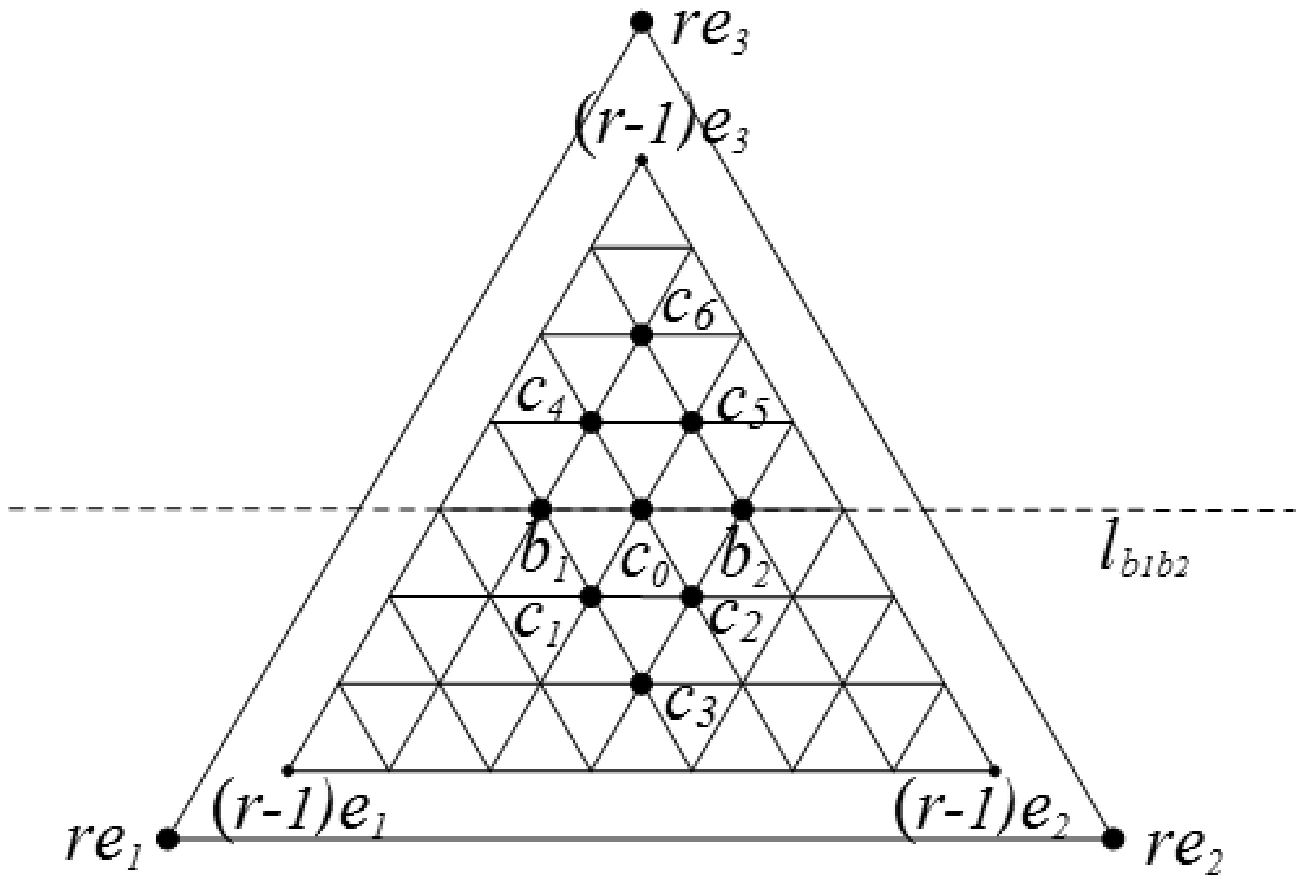}
\includegraphics[height=4.5cm]{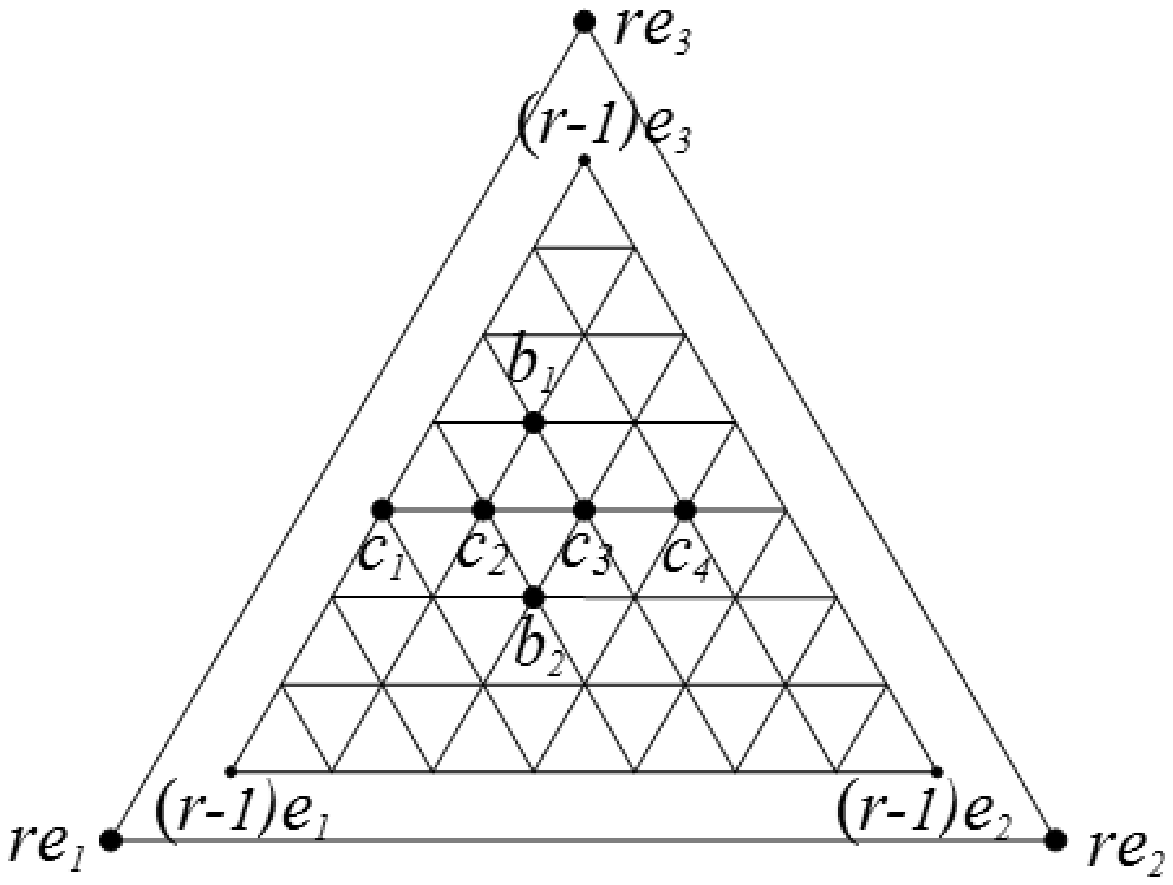}

$(a)$ Case 1.\hspace{4cm} $(b)$ Case 2.
    \caption{$t\geq 3$,
$q=2$.}
    \label{fig:1}           % NOTE! \caption MUST come before \label!
\end{figure}

Case 1. $\exists \ b_i, b_k \in B$ such that $q_{ik}=2$,
$l_{b_ib_k}$ is parallel to one side of $A(r-1)$.

Assume without loss of generality that  $b_1, b_2 \in B$ such that
$q_{12}=2$, $l_{b_1b_2}\parallel l_{(r-1)e_1,
   (r-1)e_2}\parallel l_{re_1,
   re_2}$ and the vectors $\overrightarrow{b_1b_2}$ and
   $\overrightarrow{re_1,re_2}$ have the same direction. Since
   $q=2$, $B\subset D_{b_1}(2)\cap D_{b_2}(2)$ $=\{b_1, b_2, c_0, c_1, c_2,
   c_3,$ $
   c_4, c_5, c_6\}$, as shown in Figure \ref{fig:1} $(a)$, where
   $c_0\in l_{b_1b_2}$, $\{c_1, c_2, c_3\}\subset H^+_{12}$ and $\{c_4, c_5, c_6\}\subset
   H^-_{12}$. Clearly, $v(\conv\{A(r)\cup \{b_1, b_2, c_1\}\})$ $=v(\conv\{A(r)\cup \{b_1,
   b_2\}\})$ $+v(\conv\{b_1,  c_1, re_1, re_2\})$ $+v(\conv\{b_1,  c_1, b_2,
   re_2\})$ $=(r^2+2r)+r+2=r^2+3r+2.$ Similarly, $v(\conv\{A(r)\cup \{b_1, b_2,
   c_2\}\})$ $=r^2+3r+2.$ As a result, if $B\cap H^+_{12} \neq
   \emptyset$, then $v(P)\geq r^2+3r+2$. If  $B\cap H^+_{12} =
   \emptyset$, then all the possible values of $v(P)$ are:
   $$\begin{array}{l}
 v(\conv\{A(r)\cup \{b_1, b_2,
   c_0\}\})=r^2+2r;\\
v(\conv\{A(r)\cup \{b_1, b_2,
   c_4\}\})=r^2+2r+2;\\
   v(\conv\{A(r)\cup \{b_1, b_2,
   c_5\}\})=r^2+2r+2;\\
   v(\conv\{A(r)\cup \{b_1, b_2,
   c_4, c_5\}\})=r^2+2r+3;\\
    v(\conv\{A(r)\cup \{b_1, b_2,
   c_6\}\})=r^2+2r+4.
    \end{array}$$
Case 2. Otherwise, suppose $b_1, b_2\in B$ such that $l_{b_1b_2}
\perp l_{(r-1)e_1, (r-1)e_2}$. Then we have $B\subset \{b_1, b_2,
c_1, c_2, c_3, c_4\}$ and $B$ contains at most two adjacent points
among $\{c_1, c_2, c_3, c_4\}$, as shown in Figure \ref{fig:1}
$(b)$. Clearly, $ v(\conv\{A(r)\cup \{b_1, b_2,
   c_2\}\})=v(\conv\{A(r)\cup \{b_1, b_2
  \})$ $+v(\conv\{b_1, b_2,$ $ c_2, re_1
  \})$ $=r^2+2r+1$. Similarly, $
v(\conv\{A(r)\cup \{b_1, b_2,
   c_3\}\}) =r^2+2r+1$. Furthermore,  $v(\conv\{A(r)\cup \{b_1, b_2,
   c_1\}\})$ $=
v(\conv\{A(r)\cup \{b_1, b_2,
   c_1, c_2\}\})$ $=v(\conv\{A(r)\cup \{b_2, c_1
  \})$ $+v(\conv\{b_1, b_2,  re_2, re_3
  \})+$ $v(\conv \{b_1, b_2,  c_1, re_3
  \})$ $=(r^2+2r)+r+3=r^2+3r+3$. Similarly, $v(\conv\{A(r)\cup \{b_1, b_2,
   c_4\}\})$ $=
v(\conv\{A(r)\cup \{b_1, b_2,
   c_3, c_4\}\})$ $=r^2+3r+3$. Finally, $v(\conv\{A(r)\cup \{b_1, b_2,
   c_2, c_3\}\})$ $=v(\conv\{A(r)\cup \{b_1, b_2
  \})$ $+v(\conv\{b_1, b_2, c_2, re_1
  \})$ $+v(\conv\{b_1, b_2, c_3, re_2
  \})$ $=r^2+2r+2$.

Combining all the discussions above,  we have the following facts:

if $|B|=1$, then $v(P)=r^2$;

if $|B|=2$, then $v(P)\in \{r^2+kr: k=1, 2, \cdots, r-1\}$;

 if $|B|\geq 3$ and $q=1$, then $v(P)\in
\{r^2+r+1, r^2+2r+1\}$;

 if $|B|\geq 3$ and $q=2$, then either
$v(P)\in [r^2+2r, r^2+2r+4]$ or $v(P)\geq r^2+3r$;

if $|B|\geq 3$ and $q\geq 3$, then $v(P)\geq r^2+3r$.

 If $r\geq 6$,
then $(r^2+3r-1)-(r^2+2r+5)=r-6\geq 0$, which means that  $
[r^2+2r+5, r^2+3r-1]$ is nonempty. Clearly there is no $P \in \p(r)$
satisfying $v(P)\in [r^{2}+2r+5, r^{2}+3r-1]$. The proof is
complete.
\end{proof}

\begin{corollary}
For every $v\in [r^{2}+2r, r^{2}+2r+4]$, there exists a $P \in \p(r)$ such
that $v(P)=v$.
\end{corollary}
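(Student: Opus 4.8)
The plan is to read the result directly off the explicit computations already carried out in the proof of Theorem~\ref{theo-gap3-2}. The interval $[r^2+2r, r^2+2r+4]$ consists of exactly the five integers $r^2+2r+j$ with $j\in\{0,1,2,3,4\}$, and each of these already occurs there as the $v$-value of an explicitly described polytope of the form $\conv\{A(r)\cup\{b_1,b_2,\ldots\}\}$. Thus the corollary should be obtained simply by collecting those polytopes, one for each $j$, rather than by any new construction.

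Concretely, I would take, for $v=r^2+2r$, the polytope $\conv\{A(r)\cup\{b_1,b_2,c_0\}\}$ from Case~1; for $v=r^2+2r+1$, the polytope $\conv\{A(r)\cup\{b_1,b_2,c_2\}\}$ from Case~2; for $v=r^2+2r+2$, the polytope $\conv\{A(r)\cup\{b_1,b_2,c_4\}\}$ from Case~1; for $v=r^2+2r+3$, the polytope $\conv\{A(r)\cup\{b_1,b_2,c_4,c_5\}\}$ from Case~1; and for $v=r^2+2r+4$, the polytope $\conv\{A(r)\cup\{b_1,b_2,c_6\}\}$ from Case~1. The equalities $v(P)=r^2+2r+j$ for these five choices are precisely the volume evaluations performed inside that proof, so no fresh calculation is needed.

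The one point that still needs to be recorded is that each of these $P$ really lies in $\p(r)$, i.e. that $A(r)\subset P\subset S(r)$. The left inclusion is automatic, since every listed $P$ is the convex hull of $A(r)$ together with finitely many extra points. For the right inclusion it suffices to note that the added points $b_1,b_2$ and all the $c_i$ lie in the slice $A_{r-1}$, hence in $S(r)$; as $S(r)$ is convex and already contains $A(r)$, it contains the whole convex hull. This verification is the only step beyond transcription.

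I expect no real obstacle: the statement is a bookkeeping consequence of the exhaustive case analysis already done for Theorem~\ref{theo-gap3-2}, which in passing exhibited a realizing polytope for every value in the target interval. The only mild care required is to confirm that the configurations $b_1,b_2,c_0,\ldots,c_6$ used above actually fit inside $A_{r-1}$, which holds once $r$ is not too small (consistent with the hypothesis $r\ge 6$ of the preceding theorem). Granting that, the constructions carry over verbatim and all five values are attained.
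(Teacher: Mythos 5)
Your proposal is correct and is exactly the intended argument: the paper states this corollary without a separate proof because the realizing polytopes were already exhibited in the proof of Theorem~\ref{theo-gap3-2}, and your five choices ($c_0$ for $r^2+2r$, Case~2's $c_2$ for $r^2+2r+1$, $c_4$ for $r^2+2r+2$, $\{c_4,c_5\}$ for $r^2+2r+3$, and $c_6$ for $r^2+2r+4$) match the volume evaluations computed there. Your additional check that the configurations lie in $A_{r-1}\subset S(r)$ (for $r$ large enough, consistent with $r\ge 6$) is the right and only point needing verification beyond transcription.
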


\iffalse
  If   which means that
  $|b^2_{3}- b^1_{3}|=1$, $|b^2_{2}-b^1_{2}|=0$. If $b^3_3 \notin \{b^1_3,
  b^2_3\}$, then clearly at least one of $|b^1_3-b^3_3|$ and $|b^2_3-b^3_3|$ is greater
  than or equal to 2, contradicting to the fact $q=1$. If $b^3_3 \in \{b^1_3,
  b^2_3\}$,  say  $b^3_3=b^1_3$. By the assumption $|b^2_{2}= b^1_{2}|$ we know that
   $|b^3_{2)}\neq b^1_{2}|$ .
   Let $P_3= \conv \{A(r)\cup \{ b_1, b_2\} \}$, $P_4=\conv \{re_1, re_2, \cdots, re_{(d-2)}, b_1, b_2, b_3\}$,
    $P_5=\conv \{re_1, re_2, \cdots, re_{(d-1)}, b_1, b_2\}$. By the
    above discussion we know that $V(P_3)= r^{(d-1)}+2r^{(d-2)}$, $V(P_5)= r^{(d-2)}$
   If $\lambda_B >0$, then we have $v(P)=v(P_3)+
  v(P_4)$; If $\lambda_B <0$,  then we have $v(P)=v(P_3)+
  v(P_4)$.  where $V(P_3)= r^{(d-1)}+2r^{(d-2)}$. According to our
  assumptions it is easy to check that $P_4$ is a $d$-dimensional
  simplex and
  $$v(P_4)= |\det\left(\begin{array}{cc}
    1 & re_1 \\
    1 & re_2 \\
      \vdots & \vdots \\
 1 & re_{(d-2)} \\
  1 & b_3\\
  1 & b_2
  \\
  1 & b_1
  \end{array}\right)|=|(b^2_d-b^1_d)(b^3_{(d-1)}-b^1_{(d-1)})|r^{(d-3)}=r^{(d-3)}.$$
\fi

\section{Concluding remarks}

We mention further that, as Arnold~\cite{ar} suggests, the paraboloid
\[
D_r=\{x \in \rr_+:x_1^2+\dots+x_{d-1}^2 \le x_d \le r^2\}
\]
can be used to give the lower bound in Corollary~\ref{cor:main} as
the following proof, or rather sketch of a proof, shows. Given $V>0$ with $d!v\in \N$
we are going to construct many non-equivalent convex lattice polytopes of volume $V$.

The integer convex hull $I(D_r)$ of $D_r$ has a vertex
corresponding to each lattice point $z=(z_1,\ldots,z_{d-1},0) \in rB^d\cap \rr_+$,
namely the point $z+\|z\|^2e_d$. Denoting this set of vertices by $X$ we have
$r^{d-1} \ll |X|\ll r^{d-1}$. Also, $\vol I(D_r)$ is of order $r^{d+1}$, and
\[
\vol I(D_r) - \vol I(D_r \setminus X)\ll r^{d-1}
\]
as one can check easily.

Define $r$ by $V=\vol I(D_r \setminus X)$ and note that $V\gg r^{d+1} \gg |X|^{(d+1)/(d-1)}$.  For $Z \subset X$
consider the polytopes $D(Z)=I(D_r \setminus Z)$. This is $2^{|X|}\ge \exp\{c_1 V^{(d-1)/(d+1)}\}$ convex lattice polytopes,
each having volume between $V$ and $V+c_2r^{d-1}$ (where $c_1,c_2>0$ are constants depending only on $d$).

At the vertex $(0,\ldots,0,\lfloor r^2 \rfloor)$ of $D(Z)$ one can place a congruent copy
of $S(\rho)$; here one chooses $\rho \in \N$ to be of order $r^{(d-1)/d}$. We denote this copy by $S(\rho)$ as well.
Given $Z \subset X$ set $m=m(Z)=d! (\vol D(Z)-V)$. Theorem~\ref{th:missed} implies the existence of $P(Z) \in S(\rho)$ with $m(P(Z))=m(Z)$.
We define
\[
D^*(Z)=\left[D(Z)\setminus S(\rho)\right] \cup P(Z),
\]
which is a convex lattice polytope and $\vol D^*(Z)=V$.
One shows again that the number of $D^*(W)$s equivalent to a fixed $D^*(Z)$ is at most $d!$.
We leave the details to the interested reader.

\bigskip
{\bf Acknowledgements.}  Support from ERC Advanced Research Grant no 267165 (DISCONV) and  from
Hungarian National Research Grant K 83767 is acknowledged with thanks.

The second author gratefully acknowledges financial supports by NNSF
of China (11071055); NSF of Hebei Province (A2012205080,
A2013205189); Program for New Century Excellent Talents in
University, Ministry of Education of China (NCET-10-0129); the
project of Outstanding Experts' Overseas Training of Hebei Province.
\bigskip

\bigskip

\noindent
Imre B\'ar\'any \\
R\'enyi Institute of Mathematics,\\
Hungarian Academy of Sciences\\
H-1364 Budapest PoB.~127  Hungary\\
{\tt barany@renyi.hu}\\
and\\
Department of Mathematics\\
University College London\\
Gower Street, London, WC1E 6BT, UK

\medskip
\noindent Liping Yuan\\
College of Mathematics and Information Science,\\
Hebei Normal University,\\
050024 Shijiazhuang, P. R. China.\\
%E-mail address:
lpyuan88@yahoo.com, lpyuan@mail.hebtu.edu.cn\\
and\\
Hebei Key Laboratory of Computational Mathematics and Applications,
\\
050024 Shijiazhuang,  P. R. China.

\end{document}